\documentclass[bibtex,12pt,en]{elegantpaper}
\usepackage{extarrows}
\usepackage{esint}
\usepackage{mathrsfs}
\numberwithin{equation}{section}
\allowdisplaybreaks[4]
\everymath{\displaystyle}

\title{Spectral Improvements of Geometric Inequalities on Closed K\"ahler Manifolds}

\author{Sayantan Chakraborty \and Xiaodong Wang \and Tian Wu}

\begin{document}

\date{}
\maketitle

\renewcommand{\thefootnote}{\fnsymbol{footnote}}


\begin{abstract}
    \indent Let $(M,g,J)$ be a closed K\"ahler manifold satisfying $\operatorname{Ric}\geqslant g$. We establish improved Liouville theorems for the Euler--Lagrange equations associated with the Beckner--Sobolev inequalities by incorporating the first positive eigenvalue of the $\bar\partial$-Laplacian into a differential-identity argument. As a consequence, we obtain improved Sobolev and Beckner inequalities that refine the known Riemannian and K\"ahler estimates when the first eigenvalue is sufficiently large. We also derive new upper bounds for the diameter of $(M,g)$.
    \keywords{the Beckner--Sobolev inequality, K\"ahler manifold, Liouville theorems, the differential identity method.}\\
    \textbf{2020 Mathematics Subject Classification:} 53C55, 53C21, 35B53.
\end{abstract}

\section{Introduction}

Let $(M,g)$ be a closed Riemannian manifold of dimension $n\geqslant 2$. A fundamental inequality in geometric analysis is the Beckner--Sobolev inequality (see, for instance, Hebey \cite{Heb99}):
$$\frac{1}{q-2}\Big[\Big(\fint_M|f|^q\Big)^{\frac 2 q}-\fint_M|f|^2\Big]\leqslant C\fint_M|\nabla f|^2,$$
where $q\in\begin{cases}
    [1,2)\cup(2,\frac{2n}{n-2}], & n\geqslant3,\\
    [1,2)\cup(2,+\infty), & n=2,
\end{cases}$ and $f\in H^1(M)$.

When $\mathrm{Ric}\geqslant(n-1)\kappa g$ for $\kappa>0$, Bidaut-V\'eron and V\'eron \cite{BV91} proved a Liouville theorem for the Euler--Lagrange equation associated with the Beckner--Sobolev inequality by establishing a differential identity. Their Liouville theorem yields the inequality with $C=\frac{1}{n\kappa}$. This constant is sharp in the class under consideration, since equality is attained on the standard sphere $\mathbb S^n(\frac{1}{\sqrt\kappa})$; see Beckner \cite{Bec93}.

The constant $C$ can be further improved by incorporating additional geometric information, such as the diameter or the first Laplace eigenvalue. Using a rearrangement argument, Ilias \cite{Ili83} reduced the problem on $M$ to one on the model sphere $\mathbb S^n(\frac{1}{\sqrt\kappa})$, where the inequality was already known; see \cite{BV91,Bec93}. The key ingredient in Ilias' proof is the L\'evy--Gromov isoperimetric inequality. B\'erard, Besson, and Gallot \cite{BBG85} improved the isoperimetric inequality and obtained a better constant $C=\frac{1}{n\kappa\tau ^{2}}$, where $\tau=\Big(\frac{\int_0^{\frac\pi 2}\cos^{n-1}t\,\mathrm dt}{\int_0^{\frac{\sqrt\kappa D}{2}}\cos^{n-1}t\,\mathrm dt}\Big)^{\frac 1 n}$  and $D=\operatorname{diam}(M,g)$. The maximal diameter theorem implies that $\tau\geqslant1$. Equality holds if and only if $(M,g)$ is isometric to $\mathbb S^n(\frac{1}{\sqrt\kappa})$. 

A second approach uses the first eigenvalue $\Lambda$ of $-\Delta$. Licois and V\'eron \cite{LV95}, Fontenas \cite{Fon97}, Dolbeault, Esteban, and Loss \cite{DEL14} improved Liouville theorems by using the Poincar\'e inequality. These results yield the improved constant
$$C=\frac{1}{\theta n\kappa+(1-\theta)\Lambda},\quad\theta=\frac{(n-1)^2(q-1)}{q+n^2+2n-1}\in[0,1].$$

\begin{theorem}[\cite{LV95,Fon97,DEL14}]\label{thm:Riemann-Sobolev}
    Let $(M,g)$ be a closed $n$-dimensional Riemannian manifold satisfying $\operatorname{Ric}\geqslant(n-1)g$, and let $\Lambda$ denote the first positive eigenvalue of $-\Delta$. Assume that
    $$\alpha\in\begin{cases}
        (1,\frac{n+2}{n-2}],& n\geqslant3,\\
        (1,+\infty),& n=2,
    \end{cases}\quad\text{and }0<\lambda<\frac{\theta n+(1-\theta)\Lambda}{\alpha-1}\text{ with }\theta=\frac{(n-1)^2\alpha}{\alpha+n^2+2n}.$$
    Then every positive solution of $-\Delta u+\lambda u=u^\alpha$ is constant; more precisely, $u\equiv\lambda^{\frac{1}{\alpha-1}}$.
\end{theorem}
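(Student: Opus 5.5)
We follow the Bidaut-V\'eron--V\'eron differential-identity method, with the Poincar\'e inequality inserted at the end. First we change the unknown: put $u=v^{-\beta}$ for a parameter $\beta$ to be fixed later. In terms of $v$ the equation reads
$$\Delta v=(\beta+1)\frac{|\nabla v|^2}{v}+\frac{1}{\beta}\big(\lambda v-v^{1-\beta(\alpha-1)}\big).$$
We multiply by $v^{\gamma}$ times a combination of quantities built from $\nabla v$, and apply the Bochner formula
$$\frac12\Delta|\nabla v|^2=|\nabla^2 v|^2+\langle\nabla v,\nabla\Delta v\rangle+\operatorname{Ric}(\nabla v,\nabla v).$$
We split $\nabla^2 v$ into its trace-free part $E=\nabla^2 v-\frac{\Delta v}{n}g$ and its trace, integrate over $M$, and substitute the equation for $\Delta v$.

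The outcome should be an integral identity of the form
$$0=\int_M v^{\gamma}\Big[|E|^2+A\,\frac{|\nabla v|^4}{v^2}+\big(\operatorname{Ric}(\nabla v,\nabla v)-(n-1)|\nabla v|^2\big)+B\,|\nabla v|^2\Big]+(\text{terms in }\lambda),$$
where $E$ may be replaced by $E$ minus a multiple of the trace-free part of $\frac{\nabla v\otimes\nabla v}{v}$, which makes the identity exact. Since $\operatorname{Ric}\geqslant(n-1)g$, the Ricci term is nonnegative. The power $v^{1-\beta(\alpha-1)}$ should cancel exactly once the exponents $\gamma$ and $\beta$ are chosen correctly; this is the standard choice in \cite{BV91}. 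What remains is a term proportional to $(n-\lambda(\alpha-1))\int v^{\gamma}|\nabla v|^2$, up to a positive factor, together with a term $A\int v^{\gamma-2}|\nabla v|^4$.

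Next we choose $\beta$ so that $A\geqslant0$. The upper endpoint $\alpha\leqslant\frac{n+2}{n-2}$ is exactly what guarantees that such a $\beta$ exists. This already proves the claim for $\lambda(\alpha-1)<n$, since then every term has a sign.

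To improve to $\theta n+(1-\theta)\Lambda$, we interpolate. We write the gradient term as a convex combination: a $\theta$ fraction is handled by the Ricci/Bochner bound as above, and the remaining $(1-\theta)$ fraction is rewritten, via $\int|\nabla^2 w|^2\geqslant\int(\Delta w)^2-(n-1)\int|\nabla w|^2$-type identities, as $\int(\Delta w)^2-\Lambda\int|\nabla w|^2\geqslant0$ for a suitable power $w=v^{m}$. Here we use the Poincar\'e inequality in the form $\int(\Delta w)^2\geqslant\Lambda\int|\nabla w|^2$. This is the route of \cite{DEL14}, who carry it out with $w=u^{\beta}$ and the \emph{carr\'e du champ} formalism.

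The main obstacle is the bookkeeping. We must choose the exponents so that three things happen at once: the nonlinear term cancels, the quartic coefficient $A$ stays nonnegative after part of $|E|^2$ has been sacrificed to the Poincar\'e step, and the sacrificed fraction is precisely $1-\theta$ with $\theta=\frac{(n-1)^2\alpha}{\alpha+n^2+2n}$. To do this, we optimize the quadratic form in (the coefficient of $|\nabla w|^4/w^2$, the exponent) and take its discriminant condition as the constraint; the optimal $\theta$ should come out as the stated value.

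Once the final inequality reads
$$0\geqslant c\,\big(\theta n+(1-\theta)\Lambda-\lambda(\alpha-1)\big)\int_M w^{\gamma'}|\nabla w|^2$$
with $c>0$, the hypothesis on $\lambda$ forces $\nabla w\equiv0$. Hence $u$ is constant, and the equation then gives $u\equiv\lambda^{1/(\alpha-1)}$.
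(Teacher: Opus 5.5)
Your outline has the right overall shape: a weighted Bochner-type identity, then the Poincar\'e inequality for a power of $u$. However, the step that produces the improvement over Bidaut-V\'eron--V\'eron is never carried out, and the one concrete tool you name for it goes the wrong way. On a closed manifold with $\operatorname{Ric}\geqslant(n-1)g$, the integrated Bochner formula gives
\[
\int_M|\nabla^2 w|^2=\int_M(\Delta w)^2-\int_M\operatorname{Ric}(\nabla w,\nabla w)\leqslant\int_M(\Delta w)^2-(n-1)\int_M|\nabla w|^2 .
\]
So your inequality is reversed, and it cannot bound the Hessian terms of your identity from below by $(\Delta w)^2$.

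The only valid trade is the identity $\int_M|E_w|^2+\int_M\operatorname{Ric}(\nabla w,\nabla w)=\frac{n-1}{n}\int_M(\Delta w)^2$. This uses up that share of the Ricci term, so that share can no longer be estimated by $\operatorname{Ric}\geqslant(n-1)g$. It also requires the traded trace-free square to be exactly proportional to $|E_w|^2$, for the same $w$ whose gradient appears in the final term. But the multiple of the trace-free part of $\frac{\nabla v\otimes\nabla v}{v}$ in your square is dictated by the completion of squares that absorbs the quartic terms. How large the traded share can be while those terms stay controlled is exactly what determines $\theta$. Writing that it ``should come out as the stated value'' leaves the theorem unproved. (Minor: the transformed equation should read $\Delta v=(\beta+1)\frac{|\nabla v|^2}{v}+\frac1\beta\big(v^{1-\beta(\alpha-1)}-\lambda v\big)$.)

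The paper gets the $(\Delta f)^2$ term not from the Hessian at all but from the equation. It works with the weight $u^\beta$ and the differentiated equation $\nabla_i\Delta u=\alpha\frac{\Delta u}{u}u_i-(\alpha-1)\lambda u_i$. This removes the nonlinearity for every $\beta$, so no exponent needs tuning for that. To $\nabla^i(u^\beta E_{ij}u^j)$ it adds the divergences $c\,\nabla^i(u^{\beta-1}|\nabla u|^2u_i)$ and $d\,\nabla^i(u^\beta\Delta u\,u_i)$; the last one contributes $d(\Delta u)^2$.

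After completing the squares $|E+(\frac\beta2+c)L|^2$ and $d(\Delta u+\frac\beta2\frac{|\nabla u|^2}{u})^2$, the coefficients $c$ and $d$ are fixed by killing the $\frac{\Delta u|\nabla u|^2}{u}$ and $\frac{|\nabla u|^4}{u^2}$ terms. Then $d$ is maximized at $\beta=-\frac{2\alpha}{n+2}$, giving $d=\frac{n+2-(n-2)\alpha}{(n-1)\alpha}\geqslant0$; this is where $\alpha\leqslant\frac{n+2}{n-2}$ is used. The $E$-square and the Ricci excess are simply dropped.

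The $d$-square, multiplied by $u^\beta$, is a multiple of $(\Delta f)^2$ with $f=u^{1+\beta/2}$. The spectral inequality therefore turns it into $d\Lambda\int_Mu^\beta|\nabla u|^2$. The final coefficient $(n-1)+d\Lambda-(\frac{n-1}{n}+d)(\alpha-1)\lambda$ then factors as $(\frac{n-1}{n}+d)\big(\theta n+(1-\theta)\Lambda-(\alpha-1)\lambda\big)$ with $\theta=\frac{n-1}{n-1+nd}=\frac{(n-1)^2\alpha}{\alpha+n^2+2n}$. An explicit computation of this kind is what your proposal is missing.
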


\begin{theorem}[\cite{DEL14}]\label{thm:Riemann-Beckner}
    Let $(M,g)$ be a closed $n$-dimensional Riemannian manifold satisfying $\operatorname{Ric}\geqslant(n-1)g$, and let $\Lambda$ denote the first positive eigenvalue of $-\Delta$. Assume that
    $$0<\alpha<1\quad\text{and }0<\lambda<\frac{\theta n+(1-\theta)\Lambda}{1-\alpha}\text{ with }\theta=\frac{(n-1)^2\alpha}{\alpha+n^2+2n}.$$
    Then every positive solution of $\Delta u+\lambda u=u^\alpha$ is constant; more precisely, $u\equiv\lambda^{\frac{1}{\alpha-1}}$.
\end{theorem}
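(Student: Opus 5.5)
The strategy is the Bidaut-V\'eron--V\'eron differential identity, combined with the Poincar\'e inequality as in Dolbeault--Esteban--Loss; the sublinear case $0<\alpha<1$ is treated by the same computation as Theorem~\ref{thm:Riemann-Sobolev}. The sign convention $\Delta u+\lambda u=u^\alpha$ is designed so that, after the substitution below, the nonlinear term has the right sign.

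\textbf{Step 1: change of variables.} Write $u=v^{-\beta}$, or more generally $u=v^{\gamma}$, with an exponent $\gamma$ to be tuned. The equation for $v$ then takes the form
$$\Delta v=a\frac{|\nabla v|^2}{v}+b\,v+c\,v^{p},$$
for explicit constants $a,b,c,p$ depending on $\gamma$ and $\alpha$.

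\textbf{Step 2: Bochner formula.} Multiply the Bochner formula by a power $v^{m}$ and integrate. The Bochner formula reads
$$\tfrac12\Delta|\nabla v|^2=|\nabla^2v|^2+\langle\nabla v,\nabla\Delta v\rangle+\operatorname{Ric}(\nabla v,\nabla v).$$
Write $|\nabla^2 v|^2=|\mathring{\nabla}^2 v|^2+\frac1n(\Delta v)^2$, where $\mathring{\nabla}^2 v$ is the traceless Hessian. Integrating by parts and substituting the equation for $\Delta v$ expresses
$$\int v^m\Big(\big|\mathring{\nabla}^2 v-\mu\,\tfrac{\mathring{(\nabla v\otimes\nabla v)}}{v}\big|^2+\text{(const)}\frac{|\nabla v|^4}{v^2}\Big)+\int v^m\big(\operatorname{Ric}-\text{(const)}\lambda\big)(\nabla v,\nabla v)=0.$$
Here $\mathring{(\nabla v\otimes\nabla v)}$ is the traceless part of $\nabla v\otimes\nabla v$, and the constant $\mu$ is chosen to complete the square. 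The exponents $\gamma$ and $m$ are chosen so that the $|\nabla v|^4/v^2$ coefficient is nonnegative and the $v^{p}$ contributions cancel.

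\textbf{Step 3: interpolation.} Split the curvature and $\lambda$ terms as $\theta\cdot(\text{Ricci term})+(1-\theta)\cdot(\text{leftover})$. On the $\theta$-part use $\operatorname{Ric}\ge (n-1)g$ directly. On the $(1-\theta)$-part, the quantity $\int |\nabla^2 w|^2$, for a suitable power $w=v^{s}$, is bounded below via integration by parts and the Poincar\'e inequality applied to $\nabla w$:
$$\int|\nabla^2w|^2\ge\int(\Delta w)^2\cdot(\dots)\ge\Lambda\int|\nabla w|^2,$$
which follows from $\int(\Delta w)^2\ge\Lambda\int|\nabla w|^2$. This yields an inequality of the form
$$0\ge\big(\theta n+(1-\theta)\Lambda-(1-\alpha)\lambda\big)\int v^{m}|\nabla v|^2+(\text{nonnegative}).$$
Under the hypothesis on $\lambda$, this forces $\nabla v\equiv0$. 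The constant value then satisfies $\lambda u=u^\alpha$, so $u\equiv\lambda^{1/(\alpha-1)}$.

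\textbf{Main obstacle.} The hard step is the bookkeeping of Steps 2 and 3: we must find $\gamma$, $m$ and the splitting parameter $\theta$ so that all residual quadratic forms in $(\mathring{\nabla}^2v,\ \nabla v\otimes\nabla v/v)$ are nonnegative. We must also check that the optimal choice gives exactly $\theta=\frac{(n-1)^2\alpha}{\alpha+n^2+2n}$, now with $\alpha\in(0,1)$. I would first carry out the computation with general parameters and reduce nonnegativity to a discriminant condition. That condition should be a quadratic in $\gamma$ whose feasibility yields the stated $\theta$, exactly as in the superlinear case of Theorem~\ref{thm:Riemann-Sobolev}. The key point to verify is that the sign change from $-\Delta u+\lambda u$ to $\Delta u+\lambda u$ compensates for $\alpha-1<0$.
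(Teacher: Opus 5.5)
You have the right family of tools (weights or a change of variables, Bochner, completing squares, the spectral inequality). Your remark on the sign is also correct: for $\Delta u+\lambda u=u^\alpha$ one has $\nabla\Delta u=\alpha\frac{\Delta u}{u}\nabla u+(\alpha-1)\lambda\nabla u$, so the paper simply reruns the proof of Theorem~\ref{thm:Riemann-Sobolev} with $\lambda\to-\lambda$.

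There is, however, a genuine gap exactly where $\Lambda$ has to enter. The identity you display in Step 2 has already eliminated every $(\Delta v)^2$ term. Only a traceless square, a $|\nabla v|^4/v^2$ term and $(\operatorname{Ric}-\mathrm{const}\cdot\lambda)(\nabla v,\nabla v)$ remain. Dropping the nonnegative terms and using $\operatorname{Ric}\geqslant(n-1)g$ therefore gives a $\Lambda$-independent range, at best the Bidaut-V\'eron--V\'eron one, $(1-\alpha)\lambda<n$. Step 3 tries to recover $\Lambda$ via $\int|\nabla^2w|^2\geqslant(\dots)\int(\Delta w)^2\geqslant\Lambda\int|\nabla w|^2$, but no such chain holds. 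The only lower bound of $|\nabla^2w|^2$ by $(\Delta w)^2$ is the pointwise factor $\frac1n$, and it says nothing about a traceless square. The integrated Bochner formula $\int|\nabla^2w|^2=\int(\Delta w)^2-\int\operatorname{Ric}(\nabla w,\nabla w)$ goes the wrong way under a lower Ricci bound. Concretely, on the round $\mathbb S^n$ with $w$ a first eigenfunction, $\nabla^2w=-wg$ and $\int|\nabla^2w|^2=\frac{\Lambda}{n}\int|\nabla w|^2$.

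What is missing is to keep a positive multiple of $(\Delta f)^2$ for the power $f$ of $u$ whose gradient matches the weight. In the paper this comes from a third vector field $u^\beta\Delta u\,\nabla u$ (coefficient $d$), used alongside $u^\beta E_{ij}u^j$ and $u^{\beta-1}|\nabla u|^2u_i$:
\begin{itemize}
\item The term $d(\Delta u)^2$ is completed to $d(\Delta u+\frac\beta2\frac{|\nabla u|^2}{u})^2=d(1+\frac\beta2)^{-2}u^{-\beta}(\Delta f)^2$ with $f=u^{1+\beta/2}$. Hence $\int u^\beta(\Delta u+\frac\beta2\frac{|\nabla u|^2}{u})^2\geqslant\Lambda\int u^\beta|\nabla u|^2$.
\item One requires the coefficients of $\frac{\Delta u|\nabla u|^2}{u}$ and of $\frac{|\nabla u|^4}{u^2}$ to vanish. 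Your sketch never addresses the first of these mixed terms. These two conditions determine $c,d$ as functions of $\beta$.
\item One then maximizes $d$, obtaining $\beta=-\frac{2\alpha}{n+2}$ and $d=\frac{n+2-(n-2)\alpha}{(n-1)\alpha}>0$ for $0<\alpha<1$.
\item The stated $\theta$ is just the normalization $d=\frac{\alpha+n^2+2n}{n(n-1)\alpha}(1-\theta)$, $n-1=\frac{\alpha+n^2+2n}{n(n-1)\alpha}\theta n$.
\end{itemize}

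If you prefer your formulation with a $\theta$-split of the Ricci term, the $(1-\theta)$-part of $\operatorname{Ric}(\nabla w,\nabla w)$ must be left unestimated. It must be paired with exactly $|\mathring{\nabla}^2w|^2$, with no cross term with $\nabla w\otimes\nabla w/w$, so that Bochner turns it into $\frac{n-1}{n}\int(\Delta w)^2$. This compatibility between the completed square, the weight and $w$, together with optimizing the retained coefficient, is what fixes the exponent (here $w=u^{(n+2-\alpha)/(n+2)}$) and produces the stated $\theta$. Merely choosing $\mu$ to complete a square does not achieve this. Since you defer precisely this step to ``bookkeeping'', the proposal does not yet prove the theorem.
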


In the variational formulation, the exponents are related by $q=\alpha+1$. The two theorems above show that a sufficiently large first eigenvalue can yield a substantial improvement in the Sobolev constant. It is therefore natural to ask whether sharper estimates can be obtained for special classes of manifolds with positive Ricci curvature. The K\"ahler case is especially interesting. Motivated by these observations, we investigate the K\"ahler case by incorporating spectral information into the differential-identity method.

Throughout the remainder of the paper, let $(M,g,J)$ be a closed K\"ahler manifold of complex dimension $m\geqslant2$ satisfying $\operatorname{Ric}\geqslant g$. For any sufficiently smooth function $f$, we adopt the following notation:
\begin{itemize}
    \item $\square$ denotes the $\bar\partial$-Laplacian, i.e. $\square f:=\nabla^i\nabla_i f=\frac 1 2\Delta f$, where $\Delta$ is the real Laplacian.
    \item $|\partial f|^2$ denotes $\nabla_i f\nabla^i f$, which equals $\frac12|\nabla f|^2$.
\end{itemize}
In local coordinates, we denote the metric tensor by $g_{i\overline j}$ and the Ricci tensor by $R_{i\overline j}$. Indices are raised and lowered using $g_{i\overline j}$ and $g^{i\overline j}$, respectively, and repeated indices are summed over. We write $\fint_M h:=\frac{1}{\operatorname{Vol}(M,g)}\int_M h$ for the average integral over $M$.

For convenience, throughout the remainder of the paper we use $|\partial f|^2$ and $\square f$ in place of $|\nabla f|^2$ and $\Delta f$, respectively. We rewrite the Beckner--Sobolev inequality as
\begin{equation}\label{BS}
    \frac{1}{q-2}\Big[\Big(\fint_M|f|^q\Big)^{\frac 2 q}-\fint_M|f|^2\Big]\leqslant A\fint_M|\partial f|^2,
\end{equation}
where $q\in[1,2)\cup(2,\frac{2m}{m-1}]$. The Euler--Lagrange equations associated with \eqref{BS} reduce the determination of the sharp constant $A$ to Liouville-type classifications of positive solutions to the following equations:
\begin{equation}\label{eq:Sobolev}
    -\square u+\lambda u=u^\alpha,\quad1<\alpha\leqslant\frac{m+1}{m-1},\quad\lambda>0,
\end{equation}
\begin{equation}\label{eq:Beckner}
    \square u+\lambda u=u^\alpha,\quad0<\alpha<1,\quad\lambda>0.
\end{equation}
For a smooth function $u$, we write $u_i:=\nabla_i u$, $u_{\bar i}:=\nabla_{\bar i}u$, and use analogous notation for higher covariant derivatives. Indices are raised using the metric.

Let $\Lambda$ be the first positive eigenvalue of $-\square$. We shall use the following Bochner identity:
$$\nabla^i(\nabla_j\nabla_i f\nabla^j f)=|\partial\partial f|^2+\operatorname{Ric}(\partial f,\bar\partial f)+\langle\partial\square f,\partial f\rangle.$$
Let $f$ be an eigenfunction of $-\square$ with eigenvalue $\Lambda$, i.e. $-\square f=\Lambda f$. Integrating the Bochner identity and using $-\square f=\Lambda f$, we obtain
$$\Lambda\int_M|\partial f|^2\geqslant\int_M\operatorname{Ric}(\partial f,\bar\partial f)\geqslant\int_M|\partial f|^2.$$
Since $f$ is nonconstant, $\int_M|\partial f|^2>0$, and hence $\Lambda\geqslant1$, a fundamental result due to Aubin \cite{Aub78}. The lower bound $\Lambda\geqslant 1$, together with the necessary condition obtained by linearizing \eqref{BS} around constant functions, motivates the following two conjectures.
\begin{conjecture}\label{conj:Sobolev}
    Suppose $(M,g,J)$ is a closed K\"ahler manifold of complex dimension $m\geqslant2$, $\operatorname{Ric}\geqslant g$, $1<\alpha\leqslant\frac{m+1}{m-1}$, and $0<\lambda<\frac{1}{\alpha-1}$. Then every positive solution of \eqref{eq:Sobolev} is constant; more precisely, $u\equiv\lambda^{\frac{1}{\alpha-1}}$.
\end{conjecture}

\begin{conjecture}\label{conj:Beckner}
    Suppose $(M,g,J)$ is a closed K\"ahler manifold of complex dimension $m\geqslant2$, $\operatorname{Ric}\geqslant g$, $0<\alpha<1$, and $0<\lambda<\frac{1}{1-\alpha}$. Then every positive solution of \eqref{eq:Beckner} is constant; more precisely, $u\equiv\lambda^{\frac{1}{\alpha-1}}$.
\end{conjecture}

We omit the case $m=1$, because both statements follow from the corresponding Riemannian results in dimension $n=2$. In view of the Euler--Lagrange equations associated with \eqref{BS}, Conjectures \ref{conj:Sobolev} and \ref{conj:Beckner} would imply the following sharp Beckner--Sobolev inequality.

\begin{conjecture}\label{conj:BS}
    Suppose $(M,g,J)$ is a closed K\"ahler manifold of complex dimension $m\geqslant2$, $\operatorname{Ric}\geqslant g$, and $q\in[1,2)\cup(2,\frac{2m}{m-1}]$. Then the Beckner--Sobolev inequality \eqref{BS} holds with $A=1$.
\end{conjecture}

The metric geometry of K\"ahler manifolds satisfying $\operatorname{Ric}\geqslant g$ is rich but still poorly understood. A natural model space is $(\mathbb{CP}^m,g_{\mathrm{FS}})$, equipped with the Fubini--Study metric normalized by $\operatorname{Ric}=g_{\mathrm{FS}}$. Indeed, the diameter of $(\mathbb{CP}^m,g_{\mathrm{FS}})$ is $\pi\sqrt{\frac{m+1}2}$, whereas the product $(\mathbb{CP}^1)^m$, equipped with the product metric and with each factor normalized by $\operatorname{Ric}=g_{\mathrm{FS}}$, has diameter $\pi\sqrt m$. Moreover,  a theorem of Bakry--Ledoux \cite{BL96} states that the $q>2$ case of \eqref{BS} implies $\operatorname{diam}(M,g)\leqslant\pi\sqrt{\frac{qA}{q-2}}$. Taking $A=1$ and letting $q$ approach the critical exponent $\frac{2m}{m-1}$, one is led to the following diameter conjecture.

\begin{conjecture}\label{conj:diameter}
    Suppose $(M,g,J)$ is a closed K\"ahler manifold of complex dimension $m\geqslant2$, and $\operatorname{Ric}\geqslant g$. Then $\operatorname{diam}(M,g)\leqslant\pi\sqrt m$, with equality if and only if $(M,g,J)$ is biholomorphically isometric to $(\mathbb{CP}^1)^m$, where $\mathbb{CP}^1$ is normalized so that $\operatorname{Ric}=g_{\mathrm{FS}}$.
\end{conjecture}

Applying the Riemannian results recalled above in real dimension $2m$ to the K\"ahler setting gives the following bounds. Bidaut-V\'eron and V\'eron \cite{BV91} obtained \eqref{BS} with $A=\frac{2m-1}{m}$. Licois and V\'eron \cite{LV95}, Fontenas \cite{Fon97}, Dolbeault, Esteban, and Loss \cite{DEL14} obtained \eqref{BS} with
$$A=A_1:=\frac{q+4m^2+4m-1}{m\{(2m-1)(q-1)+4[2m-(m-1)q]\Lambda\}}.$$
Baudoin and Munteanu \cite{BM21} exploited the special K\"ahler structure to improve the constant as follows.

\begin{theorem}[\cite{BM21}]\label{thm:BM}
    Suppose $(M,g,J)$ is a closed K\"ahler manifold of complex dimension $m\geqslant2$, and $\operatorname{Ric}\geqslant g$. Then the Beckner--Sobolev inequality \eqref{BS} holds with
    $$A=\begin{cases}
        A_2':=\frac{mq}{m+q-1},&1\leqslant q<2,\\
        A_2:=\frac{2m+q+1-2\sqrt{(m+1)[2m-(m-1)q]}}{m(q-1)},& 2<q\leqslant\frac{2m}{m-1}.
    \end{cases}$$
\end{theorem}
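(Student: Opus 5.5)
The plan is to reduce Theorem~\ref{thm:BM} to Liouville theorems for \eqref{eq:Sobolev} and \eqref{eq:Beckner} with thresholds $\lambda<1/((\alpha-1)A)$ and $\lambda<1/((1-\alpha)A)$, where $q=\alpha+1$. These Liouville theorems will be proved by a K\"ahler differential-identity argument in the spirit of Bidaut-V\'eron--V\'eron. The reduction is the standard one. If \eqref{BS} failed with constant $A$, minimizing the functional $\fint|\partial f|^2-\frac{1}{A(q-2)}\big[(\fint|f|^q)^{2/q}-\fint f^2\big]$ gives a nonconstant positive minimizer. For subcritical $q$ this uses compactness. For $q=\frac{2m}{m-1}$ one uses a subcritical approximation plus the fact that the relevant threshold is not attained by bubbles. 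After normalization, the minimizer solves \eqref{eq:Sobolev} (or \eqref{eq:Beckner}) with $\lambda$ below the Liouville threshold, which contradicts rigidity. Conversely, linearizing at constants only needs $\Lambda\geqslant1$.

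For the Liouville step, substitute $u=v^{-\beta}$ (or $u=v^{\beta}$), with $\beta$ to be chosen. This turns the equation into one of the form $\square v=a\frac{|\partial v|^2}{v}+b\,v^{\gamma}+c\,v$. Multiply the Bochner identity
$\nabla^i(v_{ji}v^j)=|\partial\partial v|^2+\operatorname{Ric}(\partial v,\bar\partial v)+\langle\partial\square v,\partial v\rangle$
by $v^{p}$ with a second free exponent $p$, and integrate. The essential K\"ahler input is the splitting of the real Hessian into $\partial\partial v$ and $\partial\bar\partial v$. Here $|\partial\bar\partial v|^2\geqslant\frac1m(\square v)^2$ uses the trace over $m$ complex directions, in contrast to the $\frac1{2m}$ bound on $|\nabla^2v|^2$ in the Riemannian case. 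The $(2,0)$ part $|\partial\partial v|^2$ carries no trace term and enters with a good sign. Integrating by parts via the two Bochner-type identities (for $v_{ji}v^j$ and $v_{\bar j i}v^{\bar j}$) lets us trade mixed terms such as $\int v^{p-1}v_{i\bar j}v^iv^{\bar j}$. The goal is an identity of the form
\[
0=\int v^{p}\Big[|\partial\partial v-\mu_1\tfrac{\partial v\otimes\partial v}{v}|^2+|\mathring{\partial\bar\partial} v-\mu_2\tfrac{(\partial v\otimes\bar\partial v)^\circ}{v}|^2\Big]+\int v^p(\operatorname{Ric}-\text{const})(\partial v,\bar\partial v)+Q(\beta,p)\int v^{p-2}|\partial v|^4,
\]
where $\circ$ denotes the traceless part. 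From $\operatorname{Ric}\geqslant g$ together with $\lambda$ below threshold, the remaining lower-order terms are nonnegative. One then needs $Q\geqslant0$, and $\partial v\equiv0$ follows.

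The main obstacle is the algebra of choosing $(\beta,p)$ so that $Q(\beta,p)\geqslant0$ exactly when $\lambda$ is below $1/((\alpha-1)A_2)$, respectively $1/((1-\alpha)A_2')$. I expect $Q$ to be a quadratic in $\beta$ whose discriminant condition produces the square root $\sqrt{(m+1)[2m-(m-1)q]}$ in $A_2$. So I will first optimize $p$ for fixed $\beta$, and then choose $\beta$ as the root that makes the discriminant vanish. In the Beckner range $0<\alpha<1$ I expect the optimization to degenerate to a linear condition, which gives the rational constant $A_2'=\frac{mq}{m+q-1}$. Throughout, the integrations by parts must be justified by positivity and smoothness of $u$ on the closed manifold $M$.
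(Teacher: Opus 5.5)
\textbf{Where the gap is.} Your reduction to Liouville theorems is the standard one. The paper quotes this theorem from \cite{BM21}, and its proof of Theorem~\ref{thm:Sobolev} recovers $A_2$ as the special choice $b=b_1$. The problem is the Liouville step: the identity you aim for cannot produce $A_2$.

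In your target identity, the $(2,0)$ square and the traceless $(1,1)$ square both carry weight $1$, and your only parameters are the substitution exponent and the weight $v^p$. Both of these rescale $\partial\partial v$ and $\partial\bar\partial v$ by the same factor. They also generate the cross terms $\operatorname{Re}v_{ij}v^iv^j/v$ and $v_{i\bar j}v^iv^{\bar j}/v$ with equal coefficients, and so does the divergence of $|\partial v|^2\partial v$. With equal weights, the real part of the sum of your two Bochner identities is exactly one half of the real Bochner formula in real dimension $2m$, and the completed squares are the Riemannian ones.

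The trace inequality $|\partial\bar\partial v|^2\geqslant\frac1m(\square v)^2$ that you single out adds nothing, since $|\nabla^2v|^2-\frac{1}{2m}(\Delta v)^2=2|\partial\partial v|^2+2|\mathring{\partial\bar\partial}v|^2$. Hence, whatever $(\beta,p)$ you choose, you recover the Bidaut-V\'eron--V\'eron constant $A=\frac{2m-1}{m}$ and nothing better. That constant agrees with $A_2$ only at $q=\frac{2m}{m-1}$; for example, at $m=2$, $q=3$ it is $1.5$, against $A_2=2-\frac{\sqrt3}{2}\approx1.13$. Separately, the decisive algebra in your proposal is only announced (``I expect'') and never carried out.

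\textbf{The missing idea.} You need an independent weight $b$ on the $(1,1)$ identity.
\begin{itemize}
\item In the computation behind \eqref{id:Kahler}, the Ricci term comes only from $\nabla^i(u^\beta u_{ij}u^j)$. The identity for $\nabla^i(u^\beta E_{i\bar j}u^{\bar j})$ contains no curvature, but each unit of weight on it contributes $-\frac{m-1}{m}(\alpha-1)\lambda|\partial u|^2$.
\item Take $b$ times the second identity. Kill the $\frac{\square u|\partial u|^2}{u}$ and $\frac{|\partial u|^4}{u^2}$ terms, and set $\beta=-\frac{\alpha}{m+1}$. This gives rigidity for $\lambda<\frac{1}{(\alpha-1)(1+\frac{m-1}{m}b+d)}$, provided $d(b)=\frac{4(m+1)b-(m-1)\alpha(b+1)^2}{(mb+m-1)\alpha}\geqslant0$.
\item So you want $b$ as small as possible, namely the smaller root $b_1$ of $(m-1)\alpha(b+1)^2=4(m+1)b$. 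There $d=0$, so no spectral input is needed, and $1+\frac{m-1}{m}b_1=A_2$.
\item The factor $\sqrt{(m+1)[2m-(m-1)q]}$ comes from the discriminant of this quadratic in $b$. Your discriminant heuristic therefore works only once $b$ is a variable.
\end{itemize}

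\textbf{The range $0<\alpha<1$.} Nothing degenerates to a linear condition here. The same identity with $\lambda$ replaced by $-\lambda$ gives the threshold $\frac{1}{(1-\alpha)A_2}$, that is, the square-root formula $A_2(q)$ on $(1,2)$. This is strictly smaller than $A_2'$ on $(1,2)$, with equality in the limit $q\to1$, so the stated bound follows a fortiori.

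\textbf{Smaller points in the reduction.}
\begin{itemize}
\item Your functional is $2$-homogeneous, so you must minimize under a constraint such as $\fint|f|^q=1$.
\item The endpoint $q=\frac{2m}{m-1}$ is a plain limit $q\uparrow\frac{2m}{m-1}$ of the inequality; no bubble analysis is needed.
\item For $q<2$, positivity of the minimizer is not automatic, because sublinear equations $\square u+\lambda u=u^\alpha$ admit dead cores. This step needs its own argument.
\end{itemize}
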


They also obtained a sharper upper bound on the diameter, namely
$$\operatorname{diam}(M,g)\leqslant\pi\sqrt{2m-1}(1-\frac{1}{24m}),$$
which improves the Riemannian Myers bound $\operatorname{diam}(M,g)\leqslant\pi\sqrt{2m-1}$ obtained by viewing $M$ as a real $2m$-dimensional manifold satisfying $\operatorname{Ric}\geqslant g$.

However, when $\Lambda$ is sufficiently large, $A_1<A_2$, and hence the spectral estimate yields a stronger inequality. By incorporating the first eigenvalue $\Lambda$, we establish Liouville theorems for equations \eqref{eq:Sobolev} and \eqref{eq:Beckner}.

For $0<\alpha\leqslant\frac{m+1}{m-1}$ and $\alpha\neq1$, define
\begin{equation}\label{Lambda0}
    \ell=\sqrt[4]{1-\frac{m-1}{m+1}\alpha},\quad \Lambda_0:=\frac 1 4\big(\ell+\frac 1 \ell\big)^2,
\end{equation}
\begin{equation}\label{lambda}
    0<\lambda<\begin{cases}
        \frac{m\alpha}{|\alpha-1|\{2m+\alpha+2-2\sqrt{(m+1)[m+1-(m-1)\alpha]}\}},&1\leqslant\Lambda<\Lambda_0,\\[8pt]
        \frac{m[m\alpha+4(m+1)\Lambda-2(m-1)\alpha(\Lambda+\sqrt{\Lambda(\Lambda-1)})]}{|\alpha-1|(\alpha+4m^2+4m)},& \Lambda\geqslant\Lambda_0.
    \end{cases}
\end{equation}
If $\alpha=\frac{m+1}{m-1}$, we set $\Lambda_0=+\infty$.

\begin{theorem}\label{thm:Sobolev}
    Let $(M,g,J)$ be a closed K\"ahler manifold of complex dimension $m\geqslant2$ satisfying $\operatorname{Ric}\geqslant g$, and let $\Lambda$ denote the first positive eigenvalue of $-\square$. Assume that $1<\alpha\leqslant\frac{m+1}{m-1}$, define $\Lambda_0$ by \eqref{Lambda0}, and assume that $\lambda$ satisfies \eqref{lambda}. Then every positive solution of \eqref{eq:Sobolev} is constant; more precisely, $u\equiv\lambda^{\frac{1}{\alpha-1}}$.
\end{theorem}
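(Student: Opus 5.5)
We argue by the differential-identity (Bidaut-Véron–Véron / Baudoin–Munteanu) method, with the spectral gap added as in \cite{DEL14}. Write the positive solution as $u=v^{-\beta}$ (equivalently $u=w^{\gamma}$) for a parameter $\beta$ to be chosen. Then \eqref{eq:Sobolev} becomes an equation for $v$ of the form
$$\square v=a\frac{|\partial v|^2}{v}+b\,v+c\,v^{p},$$
with $a,b,c,p$ explicit in $\alpha,\beta,\lambda$. We multiply by suitable powers of $v$ and integrate the complex Bochner identity stated before Conjecture~\ref{conj:Sobolev}, applied to $v$ with weight $v^{k}$. Integrating by parts should produce an integral identity
$$0=\int_M v^{k}\Big(|\partial\partial v-\mu\,\tfrac{\partial v\otimes\partial v}{v}|^2+\big(\operatorname{Ric}-1\big)(\partial v,\bar\partial v)+Q\Big),$$
where $Q$ is a quadratic form in the quantities $\square v$, $|\partial v|^2/v$ and $v$ (equivalently, in the $\square$-part and the $|\partial v|^4/v^2$-part). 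The point of the Kähler setting is that the $(2,0)$-Hessian $\partial\partial v$ has no trace constraint tying it to $\square v$. So, unlike the Riemannian case, we do \emph{not} use $|\nabla^2 v|^2\ge(\Delta v)^2/n$. Instead, the mixed Hessian $\nabla_i\nabla_{\bar j}v$ enters through a second identity: integrate $\nabla^{\bar j}(\nabla_i\nabla_{\bar j} v\,\nabla^i v)$, which produces $|\partial\bar\partial v|^2\ge(\square v)^2/m$. This is the source of the $m$-dependence and of the square roots in $A_2$. Following \cite{BM21}, we take a linear combination of the two Bochner identities (the $\partial\partial$ one and the $\partial\bar\partial$ one) with a free parameter $t\in[0,1]$.

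Next we bring in $\Lambda$. The terms that do not close into squares are of the form $\int v^{k}\big(|\partial v|^2-\text{const}\cdot\ldots\big)$ and $\int(\square w)^2$. Following \cite{DEL14}, we choose $\beta$ so that these reduce to $\int_M(\square w)^2$ versus $\int_M|\partial w|^2$ for a single function $w=v^{s}$ (or $u^{s}$). We then use
$$\int_M(\square w)^2\ge\Lambda\int_M|\partial w|^2,$$
which follows from expanding $w$ in eigenfunctions of $-\square$ and is valid for any nonconstant $w$. Inserting this with a weight $\sigma\in[0,1]$ (the analogue of $1-\theta$) leaves a coefficient in front of $\int v^{k-2}|\partial v|^4$ and one in front of $\lambda\int v^{k}|\partial v|^2$. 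Positivity of the full form then forces $\partial v\equiv0$, and hence $u\equiv\lambda^{1/(\alpha-1)}$, as long as $\lambda$ lies below an explicit function
$$F(\beta,t,\sigma;\alpha,m,\Lambda).$$

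The main obstacle is the optimization of $F$ over the free parameters $(\beta,t,\sigma)$ and the identification of the two regimes in \eqref{lambda}. My plan here is as follows.

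\begin{itemize}
\item First, for fixed $\sigma$, maximize over $\beta$. This is a discriminant condition on a quadratic in $\beta$. It is the step that produces the term $\sqrt{(m+1)[m+1-(m-1)\alpha]}$.
\item Second, observe that the resulting bound is linear-fractional in $\sigma$, with slope depending on $\Lambda$.
\item At $\sigma=0$ we recover the Baudoin–Munteanu bound, the first line of \eqref{lambda}.
\item For an interior optimum, balance the $\Lambda$ term against the curvature term. That optimization produces $\Lambda+\sqrt{\Lambda(\Lambda-1)}$, via maximizing an expression of the form $x\Lambda-\ldots$ under $x(1-x)$-type constraints.
\item Finally, the threshold where the interior optimum beats the endpoint is exactly where $\Lambda=\Lambda_0=\frac14(\ell+\ell^{-1})^2$. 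I expect to verify this by checking that $\sqrt{\Lambda_0(\Lambda_0-1)}=\frac14(\ell^{-2}-\ell^{2})$ makes the two expressions in \eqref{lambda} coincide.
\end{itemize}

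At the critical exponent, $\ell=0$ gives $\Lambda_0=+\infty$, consistent with only the first line being used there. Checking these algebraic matchings, and that all chosen parameters keep every coefficient nonnegative, is where I expect most of the work to lie.
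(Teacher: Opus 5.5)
You have the paper's architecture: the $(2,0)$ Bochner identity (which carries the Ricci term) mixed with the mixed-Hessian identity via a free parameter, a power weight, the leftover square arranged to be $u^\beta(\square u+\frac\beta2\frac{|\partial u|^2}{u})^2$ (a multiple of $(\square w)^2$ with $w=u^{\beta/2+1}$), and then $\int_M(\square w)^2\geqslant\Lambda\int_M|\partial w|^2$. There is a genuine gap, however. The proposal stops before the computation that proves the stated range, and your optimization roadmap misplaces the mechanism.

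In the paper, the weights are $1$ on $\nabla^i(u^\beta u_{ij}u^j)$, $b$ on the trace-free identity for $E_{i\bar j}$, and $c,d$ on the $\frac{|\partial u|^2}{u}u_i$ and $\square u\,u_i$ fields. The cross term $(b\beta+c)\frac{E_{i\bar j}u^iu^{\bar j}}{u}$ is nonzero at the optimum and is absorbed into $b|E+\frac{b\beta+c}{2b}L|^2$. If you only use $|\partial\bar\partial v|^2\geqslant(\square v)^2/m$, you must choose your substitution exponent exactly so as to kill that cross term.

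Killing the coefficients of $\frac{\square u|\partial u|^2}{u}$ and $\frac{|\partial u|^4}{u^2}$ and maximizing $d$ gives $\beta=-\frac{\alpha}{m+1}$ for \emph{every} $b$. This is a vertex, not a discriminant condition, and it yields $d(b)=\frac{4(m+1)b-(m-1)\alpha(b+1)^2}{(mb+m-1)\alpha}$. The root $\sqrt{(m+1)[m+1-(m-1)\alpha]}$ enters only through the constraint $d\geqslant0$. That constraint is forced because the spectral inequality can only be applied to a nonnegative multiple of $\int_M(\square w)^2$, and it confines $b$ to $[b_1,b_2]$ with $b_1=\frac{1-\ell^2}{1+\ell^2}=\frac1{b_2}$.

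A separate weight $\sigma$ buys nothing. The threshold $\frac{1+d\Lambda}{(\alpha-1)(1+\frac{m-1}{m}b+d)}$ is increasing in $d$ since $\Lambda\geqslant1$, so the whole square should go into the spectral inequality. Moreover, if the bound were linear-fractional in $\sigma$, its optimum would sit at an endpoint. Any interior optimum must therefore come from the mixing parameter $b$.

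The step that would actually go wrong is your criterion for the two regimes. The threshold is $f(b)=K\big[m\alpha+4(m+1)\Lambda-2(m-1)\alpha\Lambda-\frac{(m-1)\alpha(\Lambda-1)}{b}-(m-1)\alpha\Lambda b\big]$ with $K=\frac{m}{(\alpha-1)(\alpha+4m^2+4m)}$. It is concave on $b>0$ with maximizer $b_0=\sqrt{1-1/\Lambda}$. Hence the second line of \eqref{lambda}, which equals $f(b_0)$, is $\geqslant$ the first line, $f(b_1)$, for \emph{every} $\Lambda\geqslant1$. Equality holds exactly when $b_0=b_1$, i.e. at $\Lambda=\Lambda_0$, so the two expressions touch there rather than cross.

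Consequently there is no ``threshold where the interior optimum beats the endpoint''. Checking that the two expressions agree at $\Lambda_0$ says nothing about which branch is valid. The real dichotomy is admissibility: using $1-b_1^2=\frac{4\ell^2}{(1+\ell^2)^2}$, one gets $b_0\geqslant b_1$ iff $\Lambda\geqslant\Lambda_0$.

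For $\Lambda<\Lambda_0$ one has $d(b_0)<0$, so the spectral step is unavailable at $b_0$. In that range $f$ is decreasing on $[b_1,b_2]$, and the best admissible choice is $b=b_1$, where $d=0$ and only the Baudoin--Munteanu bound survives. Following your criterion, you would claim the larger value $f(b_0)$ for all $\Lambda$, which this method does not prove.
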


\begin{theorem}\label{thm:Beckner}
    Let $(M,g,J)$ be a closed K\"ahler manifold of complex dimension $m\geqslant2$ satisfying $\operatorname{Ric}\geqslant g$, and let $\Lambda$ denote the first positive eigenvalue of $-\square$. Assume that $0<\alpha<1$, define $\Lambda_0$ by \eqref{Lambda0}, and assume that $\lambda$ satisfies \eqref{lambda}. Then every positive solution of \eqref{eq:Beckner} is constant; more precisely, $u\equiv\lambda^{\frac{1}{\alpha-1}}$.
\end{theorem}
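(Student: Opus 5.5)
We prove Theorem~\ref{thm:Beckner} by adapting the differential-identity argument of Baudoin--Munteanu (Theorem~\ref{thm:BM}) to the sublinear equation $\square u+\lambda u=u^\alpha$, $0<\alpha<1$, and then feeding in the Poincar\'e inequality with constant $\Lambda$, in the spirit of \cite{DEL14}. First we change variables $u=v^{-\beta}$ (equivalently $u=e^{-w}$ with a power weight), with $\beta$ a free parameter to be optimized. In the new variable the equation reads $\square v=a\frac{|\partial v|^2}{v}+b\,v^{\gamma}-c\,v$ for explicit constants depending on $\alpha,\beta,\lambda$. The core object is the complex Hessian decomposition: we multiply the Kähler Bochner identity
$$\nabla^i(\nabla_j\nabla_i v\nabla^j v)=|\partial\partial v|^2+\operatorname{Ric}(\partial v,\bar\partial v)+\langle\partial\square v,\partial v\rangle$$
by a power $v^{s}$, integrate, and also use the $(1,1)$-part identity $\int v^s|\nabla_i\nabla_{\bar j}v|^2=\int v^s(\square v)^2+\ldots$. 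Both the $(2,0)$ Hessian $v_{ij}$ and the $(1,1)$ Hessian $v_{i\bar j}$ appear. We write them as traceless parts plus terms built from $\partial v\otimes\partial v$ and $g\,\square v$. This produces an integral identity of the form
$$0=\int v^{s}\Big(|E|^2+|F|^2+(\operatorname{Ric}-1)(\partial v,\bar\partial v)\Big)+\int v^{s}\Big[(1-(\alpha-1)\lambda\,\mu)|\partial v|^2+\kappa\frac{|\partial v|^4}{v^2}\Big]+R,$$
where $E,F$ are the traceless-corrected tensors. The remainder $R$ must be absorbed using $\int(\square v)^2$.

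Next comes the spectral input. Following \cite{DEL14}, we split the coefficient of $\int(\square v)^2$ or of the Hessian term with a parameter $\theta\in[0,1]$. The $(1-\theta)$ part is estimated from below using the integrated Bochner formula for general functions together with $\int(\square f)^2\ge\Lambda\int|\partial f|^2$, applied to a suitable power of $u$. This is what replaces the curvature bound $1$ by the combination $\theta\cdot(\text{Kähler curvature term})+(1-\theta)\Lambda$. The $\sqrt{\Lambda(\Lambda-1)}$ in \eqref{lambda} should arise from interpolating $\int|\partial\partial f|^2\ge(\Lambda-1)\int|\partial f|^2$-type bounds against the quartic term $|\partial v|^4/v^2$ by Cauchy--Schwarz, with an optimal weight.

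Then we optimize. After these steps the identity takes the form $0\ge\int v^s(\text{nonnegative quadratic form in }(|\partial v|^2/v,\ \text{tensors}))+\big(\Phi(\beta,\theta,\Lambda)-|\alpha-1|\lambda\big)\int v^{s}|\partial v|^2$. We choose $\beta$ and $\theta$ so that the quadratic form is nonnegative (a discriminant condition) and $\Phi$ is maximal. For $\Lambda<\Lambda_0$ the optimum is at $\theta=1$. There no spectral gain occurs, and we recover the Baudoin--Munteanu threshold, the first line of \eqref{lambda}. For $\Lambda\ge\Lambda_0$ an interior optimum gives the second line. The definition $\Lambda_0=\frac14(\ell+\ell^{-1})^2$ with $\ell^4=1-\frac{m-1}{m+1}\alpha$ should be exactly where the two regimes meet, because $\sqrt{\Lambda(\Lambda-1)}$ becomes $\frac14(\ell^{-2}-\ell^{2})$ there. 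Under \eqref{lambda} we conclude $\partial v\equiv0$, so $u$ is constant, and the equation forces $u\equiv\lambda^{1/(\alpha-1)}$.

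The main obstacle is the algebra of the optimization. We must check that a single admissible $\beta$ exists making all discriminants nonnegative simultaneously for $0<\alpha<1$, where signs differ from the Sobolev case (note $|\alpha-1|$ and the reversed sign of $\square$). We must also verify that the resulting threshold matches \eqref{lambda} exactly. We would first carry out the computation for the Sobolev case, Theorem~\ref{thm:Sobolev}, and then track the sign changes $\alpha-1\mapsto1-\alpha$, $\square\mapsto-\square$, expecting the same quadratic-form conditions.
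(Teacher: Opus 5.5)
Your outline has the right overall shape: a $u^\beta$-weighted differential identity mixing the $(2,0)$- and $(1,1)$-Hessians, a spectral inequality for a power of $u$, and a two-regime optimization whose endpoint reproduces Baudoin--Munteanu. However, the step that produces the second line of \eqref{lambda} is missing, and the two concrete mechanisms you commit to would not give it.

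\textbf{The proposed Hessian lower bound is false.} The bound $\int_M|\partial\partial f|^2\geqslant(\Lambda-1)\int_M|\partial f|^2$ fails under $\operatorname{Ric}\geqslant g$. The integrated Bochner identity reads $\int_M|\partial\partial f|^2=\int_M(\square f)^2-\int_M\operatorname{Ric}(\partial f,\bar\partial f)$, so a lower Ricci bound controls the $(2,0)$-Hessian only from \emph{above}. For example, on $\mathbb{CP}^m$ rescaled so that $\operatorname{Ric}=2g$ one has $\Lambda=2$. For a first eigenfunction this identity then forces $\partial\partial f\equiv0$, while $(\Lambda-1)\int_M|\partial f|^2>0$.

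\textbf{Equal weights only give the Riemannian constant.} Your identity carries the two traceless pieces with equal weight ($|E|^2+|F|^2$). With equal weights the $(2,0)$- and $(1,1)$-identities simply recombine into the identity for the real traceless Hessian. The scheme then returns only the Riemannian constant $A_1$ of \cite{LV95,Fon97,DEL14} in real dimension $2m$, not even the $A_2$ branch.

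\textbf{The missing idea is a free relative weight $b$ between those two identities.} It is the optimization in $b$, not a Hessian lower bound, that produces $\sqrt{\Lambda(\Lambda-1)}$. Take the real part of the divergences of $u^\beta u_{ij}u^j$, $u^\beta E_{i\bar j}u^{\bar j}$, $u^{\beta-1}|\partial u|^2u_i$ and $u^\beta\square u\,u_i$, with coefficients $1,b,c,d$.

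The equation enters only through $\nabla_i\square u=\alpha\frac{\square u}{u}u_i+(\alpha-1)\lambda u_i$. This is the Sobolev formula with $\lambda\mapsto-\lambda$, so for $0<\alpha<1$ nothing in the algebra changes except the sign of the $\lambda$-term. No $\square\mapsto-\square$ substitution is involved, and no new discriminant conditions arise.

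Killing the coefficients of $\frac{\square u|\partial u|^2}{u}$ and $\frac{|\partial u|^4}{u^2}$ and maximizing $d$ gives $\beta=-\frac{\alpha}{m+1}$ and $d(b)=\frac{4(m+1)b-(m-1)\alpha(b+1)^2}{(mb+m-1)\alpha}$. This is nonnegative exactly for $b\in[b_1,b_2]$, where $b_1b_2=1$.

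The two Hessian squares and $\mathscr R$ are then discarded. $\Lambda$ enters only through the single square $d(b)\big(\square u+\frac\beta2\frac{|\partial u|^2}{u}\big)^2$, via $\int_M(\square f)^2\geqslant\Lambda\int_M|\partial f|^2$ for $f=u^{1+\beta/2}$. This is just eigenfunction expansion; no Bochner formula is used at this step. It yields $0\geqslant\int_Mu^\beta\big[1+d\Lambda-(1+\frac{m-1}{m}b+d)(1-\alpha)\lambda\big]|\partial u|^2$.

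The resulting threshold on $\lambda$, as a function of $b$, has derivative equal to a positive multiple of $\Lambda-1-\Lambda b^2$. Its interior maximizer $b_0=\sqrt{1-1/\Lambda}$ gives the second line of \eqref{lambda}. When $b_0<b_1$, equivalently $\Lambda<\Lambda_0$, the constraint $d\geqslant0$ is active. One then takes $b=b_1$, so $d=0$ and there is no spectral gain, and the Baudoin--Munteanu threshold results.

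So the role you assign to a DEL-type split $\theta$ is played by $d(b)$. The optimization you defer as the main obstacle is precisely the content of the proof.
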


As a consequence, we obtain improved Sobolev and Beckner inequalities with explicit constants depending on the first eigenvalue.

\begin{corollary}\label{cor:Sobolev}
    Let $(M,g,J)$ be a closed K\"ahler manifold of complex dimension $m\geqslant2$ satisfying $\operatorname{Ric}\geqslant g$, and let $\Lambda$ be the first positive eigenvalue of $-\square$. Let $q\in[1,2)\cup(2,\frac{2m}{m-1}]$, and set
    $$\ell:=\sqrt[4]{\frac{2m-(m-1)q}{m+1}},\quad\Lambda_0:=\frac 1 4\big(\ell+\frac 1 \ell\big)^2.$$
    Then the Beckner--Sobolev inequality \eqref{BS} holds with
    $$A=\begin{cases}
        A_2=\frac{2m+q+1-2\sqrt{(m+1)[2m-(m-1)q]}}{m(q-1)}, & 1\leqslant\Lambda<\Lambda_0,\\
        A_3:=\frac{q+4m^2+4m-1}{m[m(q-1)+4(m+1)\Lambda-2(m-1)(q-1)(\Lambda+\sqrt{\Lambda(\Lambda-1)})]},& \Lambda\geqslant\Lambda_0.
    \end{cases}$$
    If $q=\frac{2m}{m-1}$, we set $\Lambda_0=+\infty$. For $q=1$, the value of $A$ is understood as the limit as $q\to1^+$.
\end{corollary}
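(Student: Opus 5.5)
The corollary should follow from Theorems \ref{thm:Sobolev} and \ref{thm:Beckner} by the standard variational argument. We first treat $q\in(2,\frac{2m}{m-1}]$ and then $q\in(1,2)$; the case $q=1$ is obtained at the end by letting $q\to1^+$. Set $\alpha=q-1$. We check directly that the formulas for $\ell$ and $\Lambda_0$ in \eqref{Lambda0} become those of the corollary, since $1-\frac{m-1}{m+1}(q-1)=\frac{2m-(m-1)q}{m+1}$. We also check that the upper bound $\lambda^*$ in \eqref{lambda} equals $\frac{1}{(q-2)A}$ for $q>2$ and $\frac{1}{(2-q)A}$ for $q<2$, with $A=A_2$ or $A_3$ according to the two ranges of $\Lambda$.

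\emph{The second regime.} Here $|\alpha-1|=|q-2|$ and $\alpha+4m^2+4m=q+4m^2+4m-1$, so the identity is immediate from the definition of $A_3$.

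\emph{The first regime.} We must show
\[
\frac{2m+q+1-2\sqrt{(m+1)[2m-(m-1)q]}}{m(q-1)}=\frac{2m+\alpha+2-2\sqrt{(m+1)[m+1-(m-1)\alpha]}}{m\alpha}.
\]
This holds because $2m+q+1=2m+\alpha+2$ and $2m-(m-1)q=m+1-(m-1)\alpha$. The same computation shows $A_2'\ge A_2$ for $q<2$, so it is consistent that the corollary uses $A_2$ throughout.

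\emph{Subcritical, supercritical case $2<q<\frac{2m}{m-1}$.} Fix $\lambda<\lambda^*$ and consider the functional
\[
\mathcal F(f)=\fint_M|\partial f|^2+\lambda\fint_M f^2-\lambda\Big(\fint_M|f|^q\Big)^{2/q}.
\]
We argue by contradiction: if $\inf\mathcal F<0$, then by compactness of $H^1\hookrightarrow L^q$ there is a minimizer of $\fint|\partial f|^2+\lambda\fint f^2$ under the constraint $\fint|f|^q=1$. Replacing the minimizer by $|f|$ and applying the maximum principle, it is positive and smooth. After scaling it solves $-\square u+\lambda u=u^{\alpha}$ (recall $\square=\nabla^i\nabla_i$ is a negative operator, matching \eqref{eq:Sobolev}). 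By Theorem \ref{thm:Sobolev}, $u$ is constant, and a constant gives $\mathcal F=0$, which is a contradiction. Hence $(q-2)^{-1}[(\fint|f|^q)^{2/q}-\fint f^2]\le\frac{1}{(q-2)\lambda}\fint|\partial f|^2$, and letting $\lambda\uparrow\lambda^*$ gives \eqref{BS} with constant $A$.

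\emph{Critical exponent.} For $q=\frac{2m}{m-1}$, where compactness fails, we pass to the limit $q\uparrow\frac{2m}{m-1}$ in the subcritical inequality. This uses continuity of $A_2$ in $q$ and dominated convergence for $\fint|f|^q$ when $f$ is smooth, followed by density. The regime $\Lambda\ge\Lambda_0$ does not arise at $q=\frac{2m}{m-1}$, since there $\Lambda_0=+\infty$. For $q$ near the critical exponent, $\Lambda_0$ is large and finite, and we only need the limit of $A_2$; alternatively we may take the minimum of the two constants, which is continuous.

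\emph{Case $1<q<2$.} We minimize $\fint|\partial f|^2-\lambda\fint f^2$ plus a multiple of $(\fint|f|^q)^{2/q}$, in the form adapted to the sign $q-2<0$. A minimizer with negative energy solves $\square u+\lambda u=u^\alpha$ with $0<\alpha<1$, which is \eqref{eq:Beckner}. Positivity needs care because $u^{\alpha}$ is not Lipschitz at $0$, so the strong maximum principle does not apply directly. We handle this the way it is done in \cite{DEL14}, or avoid it by first minimizing among functions bounded below by $\varepsilon$. Theorem \ref{thm:Beckner} then forces $u$ to be constant and gives the inequality. Finally, $q=1$ follows by letting $q\to1^+$.

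\emph{Main obstacle.} We expect the hardest step to be this existence and positivity argument for minimizers in the Beckner range $q<2$. The identification of the constants is routine algebra.
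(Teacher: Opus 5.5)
This is the route the paper intends. It presents the corollary as a direct consequence of Theorems~\ref{thm:Sobolev} and~\ref{thm:Beckner} via the Euler--Lagrange equation, with $\alpha=q-1$ and $A=1/(|q-2|\lambda^*)$, where $\lambda^*$ is the bound in \eqref{lambda}. Your matching of the two branches of \eqref{lambda} with $A_2$ and $A_3$ is correct. So are the superlinear minimization for $2<q<\frac{2m}{m-1}$ and the limits $q\to\frac{2m}{m-1}$ (where $\Lambda_0(q)\to+\infty$, so only $A_2$ is involved) and $q\to1^+$.

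Drop the alternative of taking ``the minimum of the two constants''. Since $f'(b)$ has the sign of $\Lambda-1-\Lambda b^2$, the value at $b_0$ is the supremum of $f$ over $(0,\infty)$. Hence $A_3\leqslant A_2$ for every $\Lambda$, and $\min\{A_2,A_3\}=A_3$. The Liouville argument does not give $A_3$ when $\Lambda<\Lambda_0$, because there $b_0<b_1$ and $d<0$ at $b_0$. At $q=\frac{2m}{m-1}$ and $\Lambda=1$ the minimum would give $\frac{(2m-1)^2}{m(3m-2)}<\frac{2m-1}{m}$, which is not what the corollary claims.

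\textbf{The sublinear range.} The genuine gap is positivity for $1<q<2$, and hence also for $q=1$, which you obtain as a limit. A nonnegative minimizer of $\fint|\partial f|^2-\lambda\fint f^2+\lambda(\fint|f|^q)^{2/q}$ on $\{\fint f^2=1\}$ solves, after scaling, $\square u=u^\alpha-\lambda' u$. The Lagrange multiplier shifts $\lambda$ to some $0<\lambda'<\lambda$, which is harmless. Near $\{u=0\}$ the sublinear absorption term $u^\alpha$ dominates, and this is exactly the regime in which nonnegative solutions can have dead cores. For instance, in one variable $u''=u^\alpha$ is solved by $c\,x_+^{2/(1-\alpha)}$. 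So positivity is not a regularity issue that ``care'' will settle. Theorem~\ref{thm:Beckner} is stated for positive solutions, and its proof divides by $u$ and uses the weight $u^\beta$ with $\beta=-\frac{\alpha}{m+1}<0$, so it cannot be applied to this minimizer.

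Your $\varepsilon$-floor device does not repair this. Minimizing over $\{f\geqslant\varepsilon\}$ is an obstacle problem: the minimizer satisfies the Euler--Lagrange equation only on $\{f>\varepsilon\}$ and a one-sided inequality on the contact set, so again no Liouville theorem applies. You need one of two things:
\begin{itemize}
\item a version of the Liouville argument for nonnegative solutions, for example by multiplying the identity by a cutoff $\varphi(u/\varepsilon)$ and showing that the error terms supported in $\{\varepsilon<u<2\varepsilon\}$ vanish as $\varepsilon\to0$, which requires growth bounds near the zero set such as $|\partial u|^2\leqslant Cu^{1+\alpha}$;
\item or an argument that bypasses minimizers altogether, such as a monotone nonlinear flow started from positive data.
\end{itemize}
The paper states the corollary as an immediate consequence and does not address this point either, so you are not missing an ingredient it supplies. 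As written, however, your proof of the case $1\leqslant q<2$ is incomplete.
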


For $q<\frac{2m}{m-1}$, we have $A_3=A_2$ at $\Lambda=\Lambda_0$, and $A_3$ is strictly decreasing in $\Lambda$ for $\Lambda>\Lambda_0$. Thus, Corollary \ref{cor:Sobolev} recovers the Baudoin--Munteanu bound for  $1\leqslant\Lambda\leqslant\Lambda_0$ and strictly improves it when $\Lambda>\Lambda_0$. At the critical exponent $q=\frac{2m}{m-1}$, the estimate reduces to the $A_2$ branch. Moreover, a direct calculation using $\Lambda\geqslant1$ shows that $A\leqslant A_1$ throughout the admissible parameter range. Thus, Corollary \ref{cor:Sobolev} also improves the Licois--V\'eron estimate.

By letting $q\to2$ in Corollary \ref{cor:Sobolev}, we obtain the log-Sobolev inequality.

\begin{corollary}\label{cor:log-Sobolev}
    Let $(M,g,J)$ be a closed K\"ahler manifold of complex dimension $m\geqslant2$ satisfying $\operatorname{Ric}\geqslant g$, and let $\Lambda$ be the first positive eigenvalue of $-\square$. Set
    $$\Lambda_1:=\lim_{q\to2}\Lambda_0(q)=\frac 1 4\big(\sqrt[4]{\frac{2}{m+1}}+\sqrt[4]{\frac{m+1}{2}}\big)^2.$$
    Then the log-Sobolev inequality $\fint_M f^2\log\frac{f^2}{\fint_M f^2}\leqslant 2A\fint_M|\partial f|^2$ holds with
    $$A=\begin{cases}
        \frac{2m+3-2\sqrt{2(m+1)}}{m}, & 1\leqslant\Lambda<\Lambda_1,\\
        \frac{(2m+1)^2}{m[m+2(m+3)\Lambda-2(m-1)\sqrt{\Lambda(\Lambda-1)}]},& \Lambda\geqslant\Lambda_1.
    \end{cases}$$
\end{corollary}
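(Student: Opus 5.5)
The log-Sobolev inequality will be obtained from Corollary~\ref{cor:Sobolev} by letting $q\to2$, using the standard limiting argument.

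\textbf{Reduction to $f\geqslant0$.} Since replacing $f$ by $|f|$ leaves both sides unchanged ($|\partial|f||=|\partial f|$ a.e.), we may assume $f\geqslant0$, $f\not\equiv0$. By homogeneity we may also normalize $\fint_M f^2=1$.

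\textbf{Apply \eqref{BS} for $q$ near $2$.} For $q$ close to $2$, Corollary~\ref{cor:Sobolev} gives
\[
\frac{1}{q-2}\Big[\Big(\fint_M f^q\Big)^{\frac2q}-1\Big]\leqslant A(q)\fint_M|\partial f|^2 .
\]
For bounded $f$ (then approximate $H^1$ functions), set $\phi(q):=\big(\fint_M f^q\big)^{2/q}$, so that $\phi(2)=1$. We have
\[
\frac{\mathrm d}{\mathrm dq}\log\phi=-\frac{2}{q^2}\log\fint_M f^q+\frac{2}{q}\,\frac{\fint_M f^q\log f}{\fint_M f^q}.
\]
At $q=2$ this equals $\fint_M f^2\log f=\tfrac12\fint_M f^2\log f^2$. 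Hence the left-hand side tends to $\phi'(2)=\tfrac12\fint_M f^2\log\frac{f^2}{\fint_M f^2}$, taking the one-sided limit from $q>2$ (or from $q<2$, where the inequality direction is the same after dividing by $q-2<0$, since the bracket has the same sign). This gives
\[
\fint_M f^2\log\frac{f^2}{\fint_M f^2}\leqslant 2\lim_{q\to2}A(q)\fint_M|\partial f|^2 .
\]

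\textbf{Computing the limit of $A(q)$.} At $q=2$ we have $\ell^4=\frac{2}{m+1}$, so $\Lambda_0\to\Lambda_1$.
\begin{itemize}
\item For $A_2$: at $q=2$, $2m-(m-1)q=2$, so $A_2\to\frac{2m+3-2\sqrt{2(m+1)}}{m}$.
\item For $A_3$: at $q=2$ the numerator is $4m^2+4m+1=(2m+1)^2$. The bracket in the denominator is $m+4(m+1)\Lambda-2(m-1)\Lambda-2(m-1)\sqrt{\Lambda(\Lambda-1)}=m+2(m+3)\Lambda-2(m-1)\sqrt{\Lambda(\Lambda-1)}$.
\end{itemize}
These are exactly the stated constants.

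\textbf{Main obstacle: case selection near the threshold.} When $\Lambda\ne\Lambda_1$ the case is fixed for $q$ near $2$, by continuity of $\Lambda_0(q)$. When $\Lambda=\Lambda_1$ the case may alternate as $q\to2$. However, $A_2=A_3$ at $\Lambda=\Lambda_0(q)$ and both depend continuously on $(q,\Lambda)$, so the two limits agree. Thus the limiting constant is the one stated in each case, including the boundary.
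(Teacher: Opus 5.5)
Your proposal is correct and follows the paper's route: the paper obtains this corollary in one line by letting $q\to2$ in Corollary \ref{cor:Sobolev}. Your proposal supplies the details the paper leaves implicit, namely the derivative of $(\fint_M f^q)^{2/q}$ at $q=2$, the values of $A_2$, $A_3$ and $\Lambda_0$ at $q=2$, and the continuity argument at $\Lambda=\Lambda_1$ using $A_2=A_3$ when $\Lambda=\Lambda_0(q)$.
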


This paper is organized as follows. In Section \ref{sec:Riemann}, we use differential identities to streamline the proof of the Riemannian Liouville theorem, Theorem \ref{thm:Riemann-Sobolev}. In Section \ref{sec:Kahler}, we prove Theorem \ref{thm:Sobolev} using arguments analogous to those developed in Section \ref{sec:Riemann}. The proofs of Theorems \ref{thm:Riemann-Beckner} and \ref{thm:Beckner} require only minor modifications, which are explained at the beginning of the corresponding sections. Finally, in Section \ref{sec:diameter}, we discuss upper bounds for the diameter of $(M,g)$.
\section{The Riemannian Case: Proof of Theorem \ref{thm:Riemann-Sobolev}}\label{sec:Riemann}

In this section, we present a new proof of Theorem \ref{thm:Riemann-Sobolev} which illustrates our method. Differentiating the equations $-\Delta u+\lambda u=u^\alpha$ and $\Delta u+\lambda u=u^\alpha$ yields, respectively,
$$\nabla_i\Delta u=\alpha\frac{\Delta u}{u}u_i-(\alpha-1)\lambda u_i\text{ and }\nabla_i\Delta u=\alpha\frac{\Delta u}{u}u_i+(\alpha-1)\lambda u_i.$$
These differentiated identities are the only place where the equations enter the argument. Therefore, the proof of Theorem \ref{thm:Riemann-Beckner} follows from the same argument, with $\lambda$ replaced by $-\lambda$, and is omitted.

\begin{proof}[Proof of Theorem \ref{thm:Riemann-Sobolev}]
    Define two trace-free tensors
    $$E_{ij}:=u_{ij}-\frac{\Delta u}{n}g_{ij},\quad L_{ij}:=\frac{u_iu_j}{u}-\frac{|\nabla u|^2}{nu}g_{ij}.$$
    Let $\beta\in\mathbb R$ be chosen later. Direct computations yield the following identities:
    $$u^{-\beta}\nabla^i(u^\beta E_{ij}u^j)=|E|^2+\beta \frac{E_{ij}u^iu^j}{u}+\frac{n-1}{n}\alpha\frac{\Delta u|\nabla u|^2}{u}+R_{ij}u^iu^j-\frac{n-1}{n}(\alpha-1)\lambda|\nabla u|^2,$$
    $$u^{-\beta}\nabla^i(u^{\beta-1}|\nabla u|^2u_i)=2\frac{E_{ij}u^iu^j}{u}+\frac{n+2}{n}\frac{\Delta u|\nabla u|^2}{u}+(\beta-1)\frac{|\nabla u|^4}{u^2},$$
    $$u^{-\beta}\nabla^i(u^\beta\Delta uu_i)=(\Delta u)^2+(\alpha+\beta)\frac{\Delta u|\nabla u|^2}{u}-(\alpha-1)\lambda|\nabla u|^2.$$
    Here we have used $\nabla_i\Delta u=\alpha\frac{\Delta u}{u}u_i-(\alpha-1)\lambda u_i$.

    Taking a linear combination of these three identities with coefficients $1,c,d$, where $c,d\in\mathbb R$ are to be determined, yields

    \begin{align}
        \begin{split}\label{id:Riemann}
            &u^{-\beta}\nabla^i\big(u^\beta(E_{ij}u^j+c\frac{|\nabla u|^2}{u}u_i+d\Delta u u_i)\big)\\
            ={}&|E|^2+(\beta+2c)\frac{E_{ij}u^iu^j}{u}+R_{ij}u^iu^j-(\frac{n-1}{n}+d)(\alpha-1)\lambda|\nabla u|^2\\
            &+d(\Delta u)^2+\big(\frac{n-1}{n}\alpha+\frac{n+2}{n}c+d(\alpha+\beta)\big)\frac{\Delta u|\nabla u|^2}{u}+c(\beta-1)\frac{|\nabla u|^4}{u^2}\\
            ={}&\big|E+(\frac\beta 2+c)L\big|^2+\mathscr R+d(\Delta u+\frac\beta 2\frac{|\nabla u|^2}{u})^2+\big(n-1-(\frac{n-1}{n}+d)(\alpha-1)\lambda\big)|\nabla u|^2\\
            &+\big((\frac{n-1}{n}+d)\alpha+\frac{n+2}{n}c\big)\frac{\Delta u|\nabla u|^2}{u}+\big(c(\beta-1)-\frac{n-1}{n}(\frac\beta 2+c)^2-\frac{d}{4}\beta^2\big)\frac{|\nabla u|^4}{u^2},
        \end{split}
    \end{align}
    where $\mathscr R=R_{ij}u^iu^j-(n-1)|\nabla u|^2\geqslant0$ by the assumption $\operatorname{Ric}\geqslant(n-1)g$. To eliminate the coefficients of $\frac{\Delta u|\nabla u|^2}{u}$ and $\frac{|\nabla u|^4}{u^2}$, we solve the resulting two algebraic equations and obtain
    $$c=\frac{(n+2)\beta^2+4\alpha\beta-4n\alpha}{4(n-1)\alpha},\quad d=-\frac{(n+2)^2\beta^2+4(n+2)\alpha\beta+4\alpha[(n-1)^2\alpha-n(n+2)]}{4n(n-1)\alpha^2}.$$
    The choice $\beta=-\frac{2\alpha}{n+2}$ maximizes $d$, giving $d=\frac{n+2-(n-2)\alpha}{(n-1)\alpha}\geqslant0$. Substituting these choices of $\beta$, $c$, and $d$ into \eqref{id:Riemann}, we obtain
    \begin{align}
        \begin{split}\label{ineq:Riemann}
            &u^{-\beta}\nabla^i\big(u^\beta(E_{ij}u^j+c\frac{|\nabla u|^2}{u}u_i+d\Delta u u_i)\big)\\
            \geqslant{}&d(\Delta u+\frac\beta 2\frac{|\nabla u|^2}{u})^2+\big(n-1-\frac{\alpha+n^2+2n}{(n-1)n\alpha}(\alpha-1)\lambda\big)|\nabla u|^2,
        \end{split}
    \end{align}
    
    Applying the spectral inequality $\int_M|\Delta f|^2\geqslant\Lambda\int_M|\nabla f|^2$ to $f=u^{\frac\beta 2+1}$, we obtain
    $$\int_M u^\beta\big(\Delta u+\frac\beta 2\frac{|\nabla u|^2}{u}\big)^2\geqslant\Lambda\int_M u^\beta|\nabla u|^2.$$
    Multiplying \eqref{ineq:Riemann} by $u^\beta$ and integrating over $M$, the divergence term vanishes. We then apply the preceding spectral inequality to the square term. Recalling the definition of $\theta$, we obtain
    $$0\geqslant\frac{\alpha+n^2+2n}{(n-1)n\alpha}\big(\theta n+(1-\theta)\Lambda-(\alpha-1)\lambda\big)\int_Mu^\beta|\nabla u|^2.$$
    By the assumed upper bound on $\lambda$, the coefficient on the right-hand side is strictly positive. Consequently, $\int_Mu^\beta|\nabla u|^2=0$. Since $u>0$, it follows that $\nabla u\equiv0$, and hence $u$ is constant. Substituting this constant into the equation gives $u\equiv\lambda^{\frac{1}{\alpha-1}}$.
\end{proof}

The essential point of the proof is that, after a suitable choice of the parameters $c$, $d$, and $\beta$, the terms involving $(\Delta u)^2$, $\frac{\Delta u|\nabla u|^2}{u}$, and $\frac{|\nabla u|^4}{u^2}$ can be organized into the square $(\Delta u+\frac\beta 2\frac{|\nabla u|^2}{u})^2$. Applying the spectral inequality to $u^{1+\frac\beta 2}$ then converts this square term into a lower bound involving the first eigenvalue $\Lambda$, thereby enlarging the admissible range of $\lambda$. This observation motivates the analogous construction in the K\"ahler setting developed in the next section.
\section{The K\"{a}hler Case: Proof of Theorem \ref{thm:Sobolev}}\label{sec:Kahler}

As in Section \ref{sec:Riemann}, we prove only Theorem \ref{thm:Sobolev}. In the sublinear case, differentiation of the equation yields the same identities as those below, with $\lambda$ replaced by $-\lambda$. We therefore omit the proof of Theorem \ref{thm:Beckner}.

\begin{proof}[Proof of Theorem \ref{thm:Sobolev}]
    Differentiating the equation \eqref{eq:Sobolev} yields
    $$\nabla_i\square u=\alpha\frac{\square u}{u}u_i-(\alpha-1)\lambda u_i.$$

    Define two trace-free tensors
    $$E_{i\overline j}:=u_{i\overline j}-\frac{\square u}{m}g_{i\overline j},\quad L_{i\overline j}:=\frac{u_iu_{\overline j}}{u}-\frac{|\partial u|^2}{mu}g_{i\overline j}.$$
    Let $\beta$ be a real constant to be chosen later. Direct computations yield the following identities:
    $$u^{-\beta}\nabla^i(u^\beta u_{ij}u^j)=|\partial\partial u|^2+\beta\frac{u_{ij}u^iu^j}{u}+\alpha\frac{\square u|\partial u|^2}{u}+R_{i\overline j}u^iu^{\overline j}-(\alpha-1)\lambda|\partial u|^2,$$
    $$u^{-\beta}\nabla^i(u^\beta E_{i\overline j}u^{\overline j})=|E|^2+\beta\frac{E_{i\overline j}u^iu^{\overline j}}{u}+\frac{m-1}{m}\alpha\frac{\square u|\partial u|^2}{u}-\frac{m-1}{m}(\alpha-1)\lambda|\partial u|^2,$$
    $$u^{-\beta}\nabla^i(u^{\beta-1}|\partial u|^2u_i)=\frac{u_{\overline i\overline j}u^{\overline i}u^{\overline j}}{u}+\frac{E_{i\overline j}u^iu^{\overline j}}{u}+\frac{m+1}{m}\frac{\square u|\partial u|^2}{u}+(\beta-1)\frac{|\partial u|^4}{u^2},$$
    $$u^{-\beta}\nabla^i(u^\beta\square uu_i)=(\square u)^2+(\alpha+\beta)\frac{\square u|\partial u|^2}{u}-(\alpha-1)\lambda|\partial u|^2.$$
    Since $E_{i\bar j}$ and $R_{i\bar j}$ are Hermitian, both $E_{i\overline j}u^iu^{\overline j}$ and $R_{i\overline j}u^iu^{\overline j}$ are real-valued. Taking a linear combination of the real parts of these identities with coefficients $1,b,c,d$, where $b,c,d\in\mathbb R$ are to be determined, we obtain
    \begin{align}
        \begin{split}\label{id:Kahler}
            &u^{-\beta}\operatorname{Re}\nabla^i\big(u^\beta(u_{ij}u^j+bE_{i\overline j}u^{\overline j}+c\frac{|\partial u|^2}{u}u_i+d\square u u_i)\big)\\
            ={}&|\partial\partial u|^2+b|E|^2+(\beta+c)\operatorname{Re}\frac{u_{ij}u^iu^j}{u}+(b\beta+c)\frac{E_{i\overline j}u^iu^{\overline j}}{u}\\
            &+R_{i\overline j}u^iu^{\overline j}-(1+\frac{m-1}{m}b+d)(\alpha-1)\lambda|\partial u|^2+d(\square u)^2\\
            &+\big((1+\frac{m-1}{m}b+d)\alpha+d\beta+\frac{m+1}{m}c\big)\frac{\square u|\partial u|^2}{u}+c(\beta-1)\frac{|\partial u|^4}{u^2}\\
            ={}&\big|\partial\partial u+\frac{\beta+c}{2}\frac{\partial u\otimes\partial u}{u}\big|^2+b\big|E+\frac{b\beta+c}{2b}L\big|^2+\mathscr R+d(\square u+\frac\beta 2\frac{|\partial u|^2}{u})^2\\
            &+\big(1-(1+\frac{m-1}{m}b+d)(\alpha-1)\lambda\big)|\partial u|^2+\big((1+\frac{m-1}{m}b+d)\alpha+\frac{m+1}{m}c\big)\frac{\square u|\partial u|^2}{u}\\
            &+\big(c(\beta-1)-\frac{(\beta+c)^2}{4}-\frac{m-1}{4mb}(b\beta+c)^2-\frac{d\beta^2}{4}\big)\frac{|\partial u|^4}{u^2},
        \end{split}
    \end{align}
    where $\mathscr R=R_{i\overline j}u^iu^{\overline j}-|\partial u|^2\geqslant0$ by the assumption $\operatorname{Ric}\geqslant g$. We shall eventually restrict $b$ to a positive interval. For fixed $b>0$, we require the coefficients of $\frac{\square u|\partial u|^2}{u}$ and $\frac{|\partial u|^4}{u^2}$ to vanish and then maximize $d$ with respect to $\beta$. This gives $\beta=-\frac{\alpha}{m+1}$, $c=-\frac{(\alpha+4m^2+4m)b}{(m+1)(mb+m-1)}$, and $d=\frac{4(m+1)b-(m-1)\alpha(b+1)^2}{(mb+m-1)\alpha}$.

    The condition $d\geqslant0$ is equivalent to   $b_1\leqslant b\leqslant b_2$, where
    $$b_1:=\frac{2(m+1)}{\alpha(m-1)}-1-\frac 2\alpha\sqrt{\frac{m+1}{m-1}\big(\frac{m+1}{m-1}-\alpha\big)},$$
    $$b_2:=\frac{2(m+1)}{\alpha(m-1)}-1+\frac 2\alpha\sqrt{\frac{m+1}{m-1}\big(\frac{m+1}{m-1}-\alpha\big)}=\frac{1}{b_1}.$$
    Since $\alpha\leqslant\frac{m+1}{m-1}$, we have $b_2\geqslant\frac{2(m+1)}{\alpha(m-1)}-1\geqslant1$ and $b_1=\frac{1}{b_2}>0$.
    
    For $b\in[b_1,b_2]$, we have $b>0$ and $d\geqslant0$. Consequently, all the square terms and $\mathscr R$ in \eqref{id:Kahler} are nonnegative. Dropping the first two square terms and $\mathscr R$, while retaining the square term with coefficient $d$, we obtain
    \begin{align}
        \begin{split}\label{ineq:Kahler}
            &u^{-\beta}\operatorname{Re}\nabla^i\big(u^\beta(u_{ij}u^j+bE_{i\overline j}u^{\overline j}+c\frac{|\partial u|^2}{u}u_i+d\square u u_i)\big)\\
            \geqslant{}&d(\square u+\frac\beta 2\frac{|\partial u|^2}{u})^2+\big(1-(1+\frac{m-1}{m}b+d)(\alpha-1)\lambda\big)|\partial u|^2.
        \end{split}
    \end{align}
    Applying the spectral inequality $\int_M|\square f|^2\geqslant\Lambda\int_M|\partial f|^2$ to $f=u^{\frac\beta 2+1}$, we obtain
    $$\int_M u^\beta\big(\square u+\frac\beta 2\frac{|\partial u|^2}{u}\big)^2\geqslant\Lambda\int_M u^\beta|\partial u|^2.$$
    Multiplying \eqref{ineq:Kahler} by $u^\beta$ and integrating over $M$, the divergence term vanishes. Using the spectral inequality to estimate the first term on the right-hand side, we obtain $0\geqslant$
    $$\frac{m(m-1)\alpha[1-\Lambda(b+1)^2]+4m(m+1)b[\Lambda-(\alpha-1)\lambda]+\alpha b[m^2-(\alpha-1)\lambda]}{m\alpha(mb+m-1)}\int_Mu^\beta|\partial u|^2.$$

    To conclude that $u$ is constant, it is therefore sufficient to verify that
    $$\lambda<f(b):=\frac{m\{(m-1)\alpha[1-(b+1)^2\Lambda]+[m\alpha+4(m+1)\Lambda]b\}}{(\alpha-1)(\alpha+4m^2+4m)b},\quad b\in[b_1,b_2].$$

    If $\alpha=\frac{m+1}{m-1}$, then $b_1=b_2=1$ and $\Lambda_0=+\infty$. Taking $b=1$ directly gives the first bound in \eqref{lambda}. Hence, in the remainder of the proof, we assume that $1<\alpha<\frac{m+1}{m-1}$, so that $0<b_1<1<b_2$.
    
    To maximize $f(b)$, we compute $f'(b)=\frac{m(m-1)\alpha(\Lambda-1-\Lambda b^2)}{(\alpha-1)(\alpha+4m^2+4m)b^2}$. The only positive critical point of $f$ is $b_0:=\sqrt{1-\frac 1 \Lambda}$ when $\Lambda>1$, and we have $b_0<1<b_2$. Note that
    $$f(b_1)=\frac{m\alpha}{(\alpha-1)\{2m+\alpha+2-2\sqrt{(m+1)[m+1-(m-1)\alpha]}\}},$$
    $$f(b_0)=\frac{m[m\alpha+4(m+1)\Lambda-2(m-1)\alpha(\Lambda+\sqrt{\Lambda(\Lambda-1)})]}{(\alpha-1)(\alpha+4m^2+4m)}.$$
    
    \textbf{Case 1. $1\leqslant\Lambda<\Lambda_0$.} By the definition of $\Lambda_0$, the inequality $\Lambda<\Lambda_0$ is equivalent to $b_0<b_1$ when $\Lambda>1$. Thus, $f$ is decreasing on $(b_1,b_2)$. The proof is completed by taking $b=b_1$.
    
    \textbf{Case 2. $\Lambda\geqslant\Lambda_0$.} We have $b_1\leqslant b_0$. Thus, $f$ is increasing on $(b_1,b_0)$ and decreasing on $(b_0,b_2)$. The proof is completed by taking $b=b_0$.

    Thus, under the assumed bound on $\lambda$, we can choose $b\in[b_1,b_2]$ such that $\lambda<f(b)$. It follows from the preceding integral inequality that $\int_Mu^\beta|\partial u|^2=0$. Since $u>0$, we obtain $\partial u\equiv0$, and hence $u$ is constant. Substituting into \eqref{eq:Sobolev}, we conclude that $u\equiv\lambda^{\frac{1}{\alpha-1}}$.
\end{proof}
\section{Upper Bounds for the Diameter}\label{sec:diameter}

In this section, we discuss several upper bounds for the diameter. Recall that Baudoin and Munteanu \cite{BM21} obtained the diameter estimate
$$\operatorname{diam}(M,g)\leqslant\pi\sqrt{2m-1}(1-\frac{1}{24m}).$$
Their argument involves several estimates that are not optimized with respect to the exponent $q$. By optimizing the resulting bound directly, we obtain a sharper asymptotic estimate.

Combining Theorem \ref{thm:BM} with the Bakry--Ledoux diameter estimate \cite{BL96}, we obtain
$$\operatorname{diam}(M,g)\leqslant\pi\sqrt{f(q)},$$
where $f(q):=\frac{qA_2(q)}{q-2}$, $A_2(q):=\frac{2m+q+1-2\sqrt{(m+1)[2m-(m-1)q]}}{m(q-1)}$.

To minimize $f(q)$ over $q\in(2,\frac{2m}{m-1}]$, we compute
\begin{align*}
    f'(q)={}&\frac{-(m-1)q^3+(m+3)q^2+6(m-1)q-8m}{m(q-1)^2(q-2)^2}\sqrt{\frac{m+1}{2m-(m-1)q}}\\
    &+\frac{2[1+2m+2q-(m+2)q^2]}{m(q-1)^2(q-2)^2}.
\end{align*}
After isolating the square-root term and squaring, the equation $f'(q)=0$ reduces to a cubic equation. Checking the original unsquared equation rules out any extraneous roots introduced by squaring. It follows that $f$ is strictly decreasing on $(2,q_0)$ and strictly increasing on $(q_0,\frac{2m}{m-1})$, where $q_0$ is the unique root in the
admissible interval of
$$(m-1)^2q_0^3+2(m-1)(m+4)q_0^2-4(2m-1)(2m+3)q_0+4(4m^2+4m-1)=0.$$
In fact, the explicit form of $q_0$ is
$$q_0=\frac{4\sqrt{(m+1)(13m+7)}}{3(m-1)}\cos\Big(\frac\pi 3-\frac 1 3\arccos\frac{184m^2+224m+67}{4\sqrt{m+1}(13m+7)^{\frac 3 2}}\Big)-\frac{2(m+4)}{3(m-1)}.$$
Since $q_0=2+\frac 2 m+\frac{1}{m^2}+O(\frac{1}{m^3})$ as $m\to\infty$, we obtain the sharper asymptotic estimate
$$\operatorname{diam}(M,g)\leqslant\pi\sqrt{f(q_0)}=\pi\sqrt{2m-1}\big(1-\frac{1}{4m}+O(\frac{1}{m^2})\big)\quad(m\to\infty).$$

Nevertheless, a different approach, due to Jiaping Wang, yields $\operatorname{diam}(M,g)\leqslant\pi\sqrt{2m-1}D$ for any $D\in(0,1)$ satisfying $\frac{\int_0^{\frac\pi 2}\sin^2 t\sin^{2m-1}(Dt)\,\mathrm dt}{\int_0^{\frac\pi 2}\cos^2 t\sin^{2m-1}(Dt)\,\mathrm dt}\geqslant2(2m-1)D^2$; see \cite{BM21}. It is a better bound for sufficiently large $m$. Baudoin and Munteanu \cite{BM21} used this characterization to derive
$$\operatorname{diam}(M,g)\leqslant\pi\sqrt{2m-1}\big(1-\frac{1}{200\sqrt m\log m}\big).$$
This estimate is asymptotically stronger than the one obtained by optimizing $A_2(q)$.

Finally, Corollary \ref{cor:Sobolev} yields a further improvement when the first eigenvalue is large. For every fixed $q\in(2,\frac{2m}{m-1})$, we have $\lim_{\Lambda\to+\infty}A_3(q,\Lambda)=0$. Combining this with the Bakry--Ledoux estimate gives $\operatorname{diam}(M,g)\leqslant\pi\sqrt{\frac{qA_3(q,\Lambda)}{q-2}}\to0$ as $\Lambda\to+\infty$. Thus, for sufficiently large $\Lambda$, the spectral estimate improves the diameter bounds obtained from $A_2$ and from Jiaping Wang.

\bigskip 

We end this paper with two open problems for future studies. We have seen that the geometric approach yields $C=\frac{1}{n\kappa\tau^2}$, which is better than $\frac{1}{n\kappa}$ when $(M,g) $ is not the model space $\mathbb S^n(\frac{1}{\sqrt\kappa})$. On the other hand, the analytic approach based on the study of positive solutions to $-\Delta u+\lambda u=u^q$ yields improvements only for $q<\frac{2n}{n-2}$. In the critical case $q=\frac{2n}{n-2}$, the constants in \cite{BV91} and Theorem \ref{thm:Riemann-Sobolev} degenerate to $\frac{1}{n\kappa}$. Is this the limitation of the methods or is there a reason for this? Besides, in the Kahler case, the current estimates are far from optimal. What other structures can one exploit to get better estimates?

\bigskip
\noindent\textbf{Acknowledgments.} The authors would like to thank Professor Xi-Nan Ma for the constant encouragement in this paper. Tian Wu is supported by the National Key Research and Development Project (Grant No. 2025YFA1017600), the Fundamental Research Funds for the Central Universities (Grant No. WK0010250106), and the Open Research Fund of Hubei Key Laboratory of Mathematical Sciences (Central China Normal University, Wuhan 430079, P. R. China).

\noindent \textbf{Research ethics:} Not applicable.

\noindent \textbf{Informed consent:} Not applicable.

\noindent \textbf{Author contributions:} All authors have accepted responsibility for the entire content of this\\ manuscript and approved its submission.

\noindent \textbf{Use of Large Language Models, AI and Machine Learning Tools:} None declared.

\noindent \textbf{Conflict of interest:} The authors state no conflict of interest.

\noindent \textbf{Data availability:} Not applicable.


\footnotesize{
    Contact information:
    \begin{itemize}
        \item Sayantan Chakraborty, Department of Mathematics, Michigan State University, East Lansing, MI 48824, USA. Email: \emph{chakra85@msu.edu}
        \item Xiaodong Wang, Department of Mathematics, Michigan State University, East Lansing, MI 48824, USA. Email: \emph{xwang@msu.edu}
        \item Tian Wu, School of Mathematical Sciences, University of Science and Technology of China, Hefei, Anhui, 230026, People's Republic of China. Email: \emph{wt1997@ustc.edu.cn}
    \end{itemize}
}

\end{document}